\newtheorem{thm}{Theorem}[section]
\newtheorem{lem}[thm]{Lemma}
\theoremstyle{definition}
\newtheorem{defn}[thm]{Definition}
\newtheorem{rem}[thm]{Remark}
\newtheorem{exam}[thm]{Example}
\numberwithin{equation}{section}
\newcommand{\N}{\mathbb{N}}
\newcommand{\Z}{\mathbb{Z}}
\newcommand{\R}{\mathbb{R}}
\newcommand{\C}{\mathbb{C}}
\def\sI{\mathscr{I}}
\def\sE{\mathscr{E}}
\def\sM{\mathscr{M}}
\DeclareMathOperator{\Ad}{Ad}
\DeclareMathOperator{\Aut}{Aut}
\DeclareMathOperator{\id}{id}
\def\al{\alpha}
\def\hal{{\widehat{\al}}}
\def\be{\beta}
\def\hbe{{\widehat{\be}}}
\def\de{\delta}
\def\la{\lambda}
\def\vep{\varepsilon}
\def\ps{{\psi}}
\def\vph{\varphi}
\def\om{\omega}
\def\si{\sigma}
\def\th{\theta}
\def\De{\Delta}
\def\Ga{\Gamma}
\def\La{\Lambda}
\def\col{\colon}
\def\subs{\subset}
\def\ovl{\overline}
\def\oti{\otimes}
\def\rti{\rtimes}
\begin{document}
\title{Ultraproducts of crossed product von Neumann algebras}

\author[R. Tomatsu]{Reiji Tomatsu}
\address{
Department of Mathematics, Hokkaido University,
Hokkaido \mbox{060-0810},
JAPAN}
\email{tomatsu@math.sci.hokudai.ac.jp}

\subjclass[2010]{Primary 46L10; Secondary 46L40}
\keywords{von Neumann algebra}
\thanks{The author was partially supported by JSPS KAKENHI Grant Number 15K04889.}

\maketitle

\begin{abstract}
We study a relationship between the ultraproduct of
a crossed product von Neumann algebra
and
the crossed product of an ultraproduct von Neumann algebra.
As an application, the continuous
core of an ultraproduct von Neumann algebra is described.
\end{abstract}

\section{Introduction}
Ultraproduct technique
is utilized by many researchers
for settling problems
concerned with
classification theory of C$^*$-algebras
or von Neumann algebras.
Among them,
Connes' study of automorphism analysis is remarkable.
He used central sequence von Neumann algebras
in an effective way to classify outer automorphisms
on an injective type II$_1$ factor.
Since then, classification of group actions
on injective factors has been one of the main topics
in theory of operator algebra,
and
we now have a complete classification for amenable discrete
group actions on them \cite{Co-outer,J-act,Kat-S-T,KawST,Oc}.
See \cite{M} for a unified and direct proof.

For continuous groups such as $\R$,
however, we have to say classification of their actions
on injective factors
is far from being complete.
The main difficulty is that continuous group actions do not
continuously extend to ultraproduct von Neumann algebras.
This leads us to the notion of equicontinuity of norm bounded
sequences that is introduced by Kishimoto \cite{Kishi-CMP}.
Then an equicontinuous part of an ultraproduct von Neumann algebra
could be small in general, but this could provide us
with remedy for study of continuous group actions.

The aim of this paper is to describe a crossed products of
an equicontinuous part of an ultraproduct von Neumann algebra.
The main theorem states the crossed product
coincides with
the equicontinuous part of the ultraproduct of the crossed product
von Neumann algebra with respect to the dual action.
Then we obtain the continuous crossed product decomposition
of a type III ultraproduct von Neumann algebra.
This decomposition
also gives us a characterization of fullness of
a type III$_1$ factor in terms of its continuous core.

\vspace{10pt}
\noindent
{\bf Acknowledgements.}
The author is grateful
to Yoshimichi Ueda
for various advice and valuable comments on this paper.

\section{Preliminary}

\subsection{Ultraproduct}
Our references are \cite{AH,Oc}.
In this paper, we denote by $\omega$ a fixed free ultrafilter
on $\N=\{1,2,\dots\}$.
By $M$, we always denote a von Neumann algebra with separable predual.
The automorphism group of a von Neumann algebra
$N$ is denoted by $\Aut(N)$,
and the center of $N$ is by $Z(N)$.

Denote by $\ell^\infty(M)$ the unital C$^*$-algebra
which consists of
all norm bounded sequences $(x^\nu)=(x^1,x^2,\dots)$,
$x^\nu\in M$.
An element $(x^\nu)\in \ell^\infty(M)$
is said to be \emph{$\om$-trivial}
when $x^\nu$ converges to 0 in the strong$*$ topology
as $\nu\to\om$.
By $\mathscr{I}_\om(M)$, we denote the set of
all $\om$-trivial sequences.
It is known
that $\mathscr{I}_\om(M)$ is a C$^*$-subalgebra
of $\ell^\infty(M)$, but it is not an ideal
when $M$ is infinite.
Hence we consider its normalizer $\mathscr{M}^\om(M)$
defined by
\[
\mathscr{M}^\om(M)
:=
\{x\in \ell^\infty(M)
\mid
x\mathscr{I}_\om(M)+\mathscr{I}_\om(M)x
\subs
\mathscr{I}_\om(M)
\}.
\]
Then the quotient C$^*$-algebra
$M^\om:=\mathscr{M}^\om(M)/\mathscr{I}_\om(M)$
is in fact a W$^*$-algebra
that is called
an \emph{ultraproduct von Neumann algebra}.
We denote by $(x^\nu)^\om$ the equivalence class
$(x^\nu)+\sI_\om(M)$ for $(x^\nu)\in\sM^\om(M)$.

Note that $M$ is regarded as a von Neumann subalgebra of $M^\om$
by mapping $x\in M$ to its constant sequence
$(x,x,\dots)^\om=:x^\om$.
Since the norm unit ball of $M$ is $\si$-weakly compact,
each $(x^\nu)\in\ell^\infty(M)$
has the $\si$-weak ultralimit
$\lim_{\nu\to\om}x^\nu$.
This gives us a well-defined map $E_M\colon M^\om\to M$
defined by $E_M((x^\nu)):=\lim_{\nu\to\om}x^\nu$.
Then $E_M$ is actually a faithful normal conditional expectation.
For a weight $\vph$ on $M$,
we denote by $\vph^\om$ the ultraproduct weight of $\vph$ on $M^\om$,
that is, $\vph^\om:=\vph\circ E_M$.

An element $(x^\nu)\in\ell^\infty(M)$ is said to be
\emph{$\om$-central}
if $x^\nu\vph-\vph x^\nu\in M_*$ converges to 0 in norm
as $\nu\to\om$
for all $\vph\in M_*$,
where we use the usual notation
$a \vph(x):=\vph(xa)$ and $\vph a(x):=\vph(ax)$
for $a,x\in M$ and $\vph\in M_*$.
Then $\mathscr{C}_\om(M)$, the set of all $\om$-central sequences,
is a unital
C$^*$-subalgebra of $\ell^\infty(M)$ and contains $\mathscr{I}_\om(M)$.
We denote by $M_\om$ the quotient C$^*$-algebra
$\mathscr{C}_\om(M)/\mathscr{I}_\om(M)$ that is a W$^*$-subalgebra
of $M^\om$.
We will call $M_\om$ the \emph{asymptotic centralizer} of $M$.

\subsection{Action and crossed product}
Let $G$ be a locally compact Hausdorff group
that is always assumed to be second countable.
We use the usual notation $C_c(G)$ and $L^2(G)$
for the set of compactly supported continuous functions on $G$
and the Hilbert space associated with a fixed left invariant
Haar measure on $G$.
The $*$-algebra operations of $C_c(G)$
are defined as usual
$f*g(s):=\int_G f(t)g(t^{-1}s)\,dt$
and
$f^*(s):=\Delta(s)^{-1}\overline{f(s^{-1})}$
for $f,g\in C_c(G)$ and $s\in G$,
where $\Delta$ denotes the modular function of $G$
and $dt$ the left invariant Haar measure.

An action of $G$ on $M$ means a group homomorphism
$\alpha\colon G\ni s\mapsto \alpha_s\in \Aut(M)$
such that $\|\vph\circ\alpha_s-\vph\|_{M_*}\to0$
for all $\vph\in M_*$
if $s\to e$ in $G$,
where $e$ denotes the neutral element of $G$.
The fixed point algebra $M^\al$ means the collection of
all $x\in M$ such that $\al_s(x)=x$ for all $s\in G$.
We next introduce the crossed product von Neumann
algebra $M\rti_\alpha G$ as follows.
Suppose $M$ is acting on a Hilbert space $H$.
We define the operators $\pi_\alpha(x)$ and $\lambda^\alpha(s)$
on the tensor product Hilbert space
$H\otimes L^2(G)=L^2(G,H)$
as follows:
for
$x\in M$, $s,t\in G$
and
$\xi\in L^2(G,H)$,
\[
(\pi_\alpha(x)\xi)(t):=\alpha_{t^{-1}}(x)\xi(t),
\quad
(\la^\al(s)\xi)(t):=\xi(s^{-1}t).
\]

We can also write $\la^\al(s)=1\oti \la(s)$
for $s\in G$,
where $\la$ denotes the left regular representation on $L^2(G)$.
Then $M\rti_\alpha G$ denotes the von Neumann algebra
generated by $\pi_\al(M)$ and $\la^\al(G)$.
For $f\in C_c(G)$,
we denote $\la^\al(f):=\int_G f(s)\la^\al(s)\,ds$.
Note that $\la^\al$ is a $*$-representation of $C_c(G)$
on $H\oti L^2(G)$.

For abelian $G$,
$M\rti_\alpha G$ admits the dual action $\hal$ of the dual group
$\widehat{G}$
satisfying
\[
\hal_p(\pi_\alpha(x))=\pi_\alpha(x),
\quad
\hal_p(\la^\al(s))=\overline{\langle s,p\rangle}\la^\al(s),
\quad
x\in M,\ s\in G,\ p\in\widehat{G},
\]
where $\langle\cdot,\cdot\rangle$ denotes the dual coupling of
$G$ and $\widehat{G}$.
By Takesaki duality,
we have an isomorphism $\Ga_\al$
from $(M\rti_\al G)\rti_\hal\widehat{G}$
onto $M\oti B(L^2(G))$
such that
\[
\Ga_\al(\pi_\hal(\pi_\al(x)))=\pi_\al(x),
\quad
\Ga_\al(\pi_\hal(\la^\al(s)))=1\oti\la(s),
\quad
\Ga_\al(\la^\hal(p))=1\oti\ovl{\langle p,\cdot\rangle}
\]
for $x\in M$, $s\in G$ and $p\in\widehat{G}$.
As for the bidual action $\widehat{\widehat{\al}}$,
we have
$\Ga_\al\circ\widehat{\widehat{\al}}_s=\al_s\oti\Ad \rho(s)$
for $s\in G$,
where $\rho$ denotes the right regular representation
on $L^2(G)$.

\section{Main result}

\subsection{Equicontinuous parts}
Readers are referred to \cite[Chapter 3]{MT-Roh}.
Note that basic results introduced there are concerned with $\R$,
but they also hold for a general locally compact Hausdorff groups.

\begin{defn}
Let $\alpha$ be an action of a locally compact Hausdorff group
$G$ on a von Neumann algebra $M$.
A norm bounded sequence $(x^\nu)$,
$x^\nu\in M$,
is said to be $(\al,\om)$-\emph{equicontinuous}
when
the following holds:
for every $\si$-strong* neighbourhood $V$ of $0\in M$,
there exist a neighbourhood $U$ of the neutral element
$e\in G$
and $A\in \om$
such that
$\al_s(x^\nu)-x^\nu\in V$
for all
$s\in U$ and $\nu\in A$.
\end{defn}

Denote by $\mathscr{E}_\al^\om(M)$
the collection of all $(\al,\om)$-equicontinuous sequences.
Set
\[
M_\al^\om:=(\mathscr{E}_\al^\om(M)\cap \mathscr{M}^\om(M))/\mathscr{I}_\om(M)
\]
and
\[
M_{\om,\al}
:=(\mathscr{E}_\al^\om(M)\cap \mathscr{C}^\om(M))/\mathscr{I}_\om(M).
\]
which we will call the \emph{equicontinuous parts} of $M^\om$
and $M_\om$, respectively.
Note that $M_{\om,\al}\subs M_\al^\om$, $M\subs M_\al^\om$
and they are von Neumann subalgebras
which admit the $G$-action $\alpha^\om$
defined by
$\alpha_s^\om((x^\nu)^\om):=(\alpha_s(x^\nu))^\om$
for $s\in G$ and $(x^\nu)^\om\in M_\al^\om$.

Note the crucial fact that
$\sM^\om(M)$ coincides with
$\mathscr{E}_\al^\om(M)$
for $\al:=\si^\vph$, the modular automorphism group
of a given faithful normal state $\vph$ on $M$.
(See
\cite[Proposition 4.11]{AH}
and \cite[Theorem 1.5]{MT-discrete} for its proof.)

A useful tool to construct an equicontinuous sequence
is to average a norm bounded sequence by $L^1$-function.
To be precise,
we let $f\in L^1(G)$ and $(x^\nu)\in\ell^\infty(M)$.
Then $(\alpha_f(x^\nu))$ is $(\al,\om)$-equicontinuous,
where $\alpha_f(y)=\int_G f(s)\alpha_s(y)\,ds$
for $y\in M$.
Note that the averaging and the ultraproduct of an equicontinuous sequence
are commutative operations,
that is,
for $x:=(x^\nu)^\om\in M_\al^\om$,
we have $\al_f^\om(x)=(\al_f(x^\nu))^\om$.
In particular,
the set which consists of $(\al_f(x^\nu))^\om$,
$f\in L^1(G)$ and $(x^\nu)\in \mathscr{M}^\om(M)$
is $\si$-weakly dense in $M_\al^\om$.

\begin{exam}\label{exam:G}
Consider the action $\alpha$
of $G$ on $M:=L^\infty(G)$ by left translation.
Then $M_\al^\om$ is actually nothing but $M$.
This fact tells us that equicontinuous parts could be small.
We need the following claim to show this:
if a uniformly norm bounded net $\{f_n\}_{n\in I}$ in $M$
converges to 0 in the $\sigma$-weak topology,
then the convolution $g*f_n$ converges to 0
compact uniformly for all $g\in L^1(G)$.
Then for $t\in G$
\[
g*f_n(t)=\int_G g(ts)f_n(s^{-1})\,ds
=
\langle g_{t^{-1}},\tilde{f_n}\rangle,
\]
where $g_r(s):=g(r^{-1}s)$, $\tilde{f_n}(s):=f_n(s^{-1})$,
and $\langle \cdot,\cdot\rangle$ denotes the pairing
of $L^1(G)$ and $L^\infty(G)$.
It is trivial that $\tilde{f_n}\to0$ $\si$-weakly,
and $g*f_n$ converges to 0 pointwise.

Let $K\subset G$ be a compact set.
The map $K\ni t\mapsto g_{t^{-1}}\in L^1(G)$ is norm-continuous.
Thus for $\vep>0$, there exist $t_1,\dots,t_k\in K$
such that for any $t\in K$, $\|g_{t^{-1}}-g_{t_i^{-1}}\|<\vep$
for some $t_i$.
Take $n_0\in I$ so that $|\langle g_{t_i^{-1}},\tilde{f_n}\rangle|<\vep$
for all $i=1,\dots,k$ and $n\geq n_0$.
For $t\in K$, take $t_i$ so that $\|g_{t^{-1}}-g_{t_i^{-1}}\|<\vep$.
When $n\geq n_0$, we have
\begin{align*}
|g*f_n(t)|
&\leq |\langle g_{t^{-1}}-g_{t_i^{-1}},\tilde{f_n}\rangle|
+
|\langle g_{t_i^{-1}},\tilde{f_n}\rangle|
\\
&\leq
\vep\|f_n\|_\infty
+
|\langle g_{t_i^{-1}},\tilde{f_n}\rangle|
<
(\|f_n\|_\infty+1)\vep.
\end{align*}
So, we have proved the claim.

Recall that $M_\al^\om$ has
the $\si$-weakly total set which consists of $(\al_g(f^\nu))$,
$g\in L^1(G)$ and $(f^\nu)\in \ell^\infty(M)$.
Putting $f:=\lim_{\nu\to\om}f^\nu$,
we see $\al_g(f^\nu)=g*f^\nu\to g*f$ compact uniformly.
In particular, $(\al_g(f^\nu))^\om$ equals
the constant sequence $\al_g(f)^\om$.
\end{exam}

\begin{lem}
\label{lem:Eal}
There exists a unique faithful normal conditional expectation
$E_\al$ from $M^\om$ onto $M_\al^\om$
such that
for an arbitrary faithful normal semifinite weight $\vph$ on $M$,
one has
$\vph^\om=\vph^\om\circ E_\al$.
\end{lem}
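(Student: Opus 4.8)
The plan is to construct $E_\al$ as a limit of the averaging maps $\al_f^\om$ as $f$ runs over an approximate identity of $L^1(G)$ concentrated at $e$, and then to read off all its properties from the structure of $M_\al^\om$. Concretely, for $x=(x^\nu)^\om\in M^\om$ and $f\in L^1(G)$ with $f\ge0$ and $\int_G f=1$, the element $\al_f^\om(x):=(\al_f(x^\nu))^\om$ lies in $M_\al^\om$ by the averaging remark preceding the lemma, and $\al_f^\om\colon M^\om\to M_\al^\om$ is a unital positive contraction. Fixing $x$, I would consider the net $\{\al_f^\om(x)\}_f$, indexed by such $f$ of shrinking support and directed by reverse inclusion of supports; it lives in the $\|x\|$-ball of $M^\om$, which is $\sigma$-weakly compact, so the net has $\sigma$-weak cluster points, and I would define $E_\al(x)$ to be such a cluster point, to be shown unique.

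The heart of the argument is the well-definedness, i.e.\ the $\sigma$-weak convergence of this net. Here I would use two facts. First, $\al_g^\om\circ\al_f^\om=\al_{g*f}^\om$ and $\|\al_f^\om(x)-\al_{f'}^\om(x)\|\le\|f-f'\|_1\,\|x\|$, so that $f\mapsto\al_f^\om(x)$ is $L^1$-norm-to-norm continuous; since $g*f\to g$ in $L^1$ as $f\to\delta_e$, the net $\al_{g*f}^\om(x)$ converges in norm to $\al_g^\om(x)$ for each fixed $g\in L^1(G)$. Now if $y$ is any $\sigma$-weak cluster point of $\{\al_f^\om(x)\}_f$, then $y\in M_\al^\om$ because $M_\al^\om$ is $\sigma$-weakly closed, and applying the $\sigma$-weakly continuous map $\al_g^\om$ shows that $\al_g^\om(y)$ is a cluster point of $\al_{g*f}^\om(x)$, hence $\al_g^\om(y)=\al_g^\om(x)$ for every $g$. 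For two cluster points $y_1,y_2$ this yields $\al_g^\om(y_1)=\al_g^\om(y_2)$ for all $g$; letting $g\to\delta_e$ and using that $\al_g^\om(z)\to z$ $\sigma$-strongly$*$ for $z\in M_\al^\om$, which is precisely $(\al,\om)$-equicontinuity, I conclude $y_1=y_2$. Thus the cluster point is unique and the net converges. Consequently $E_\al$ is well defined, linear, contractive, maps $M^\om$ into $M_\al^\om$, and restricts to the identity on $M_\al^\om$; being a norm-one idempotent onto $M_\al^\om$, it is a conditional expectation by Tomiyama's theorem.

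For the weight condition I would verify the single identity $E_M\circ E_\al=E_M$ and deduce $\vph^\om=\vph^\om\circ E_\al$ for every faithful normal semifinite weight $\vph$ simultaneously. Since $E_M$ is normal and $\al_f$ is $\sigma$-weakly continuous, one has $E_M(\al_f^\om(x))=\al_f(E_M(x))$, and $\al_f(E_M(x))\to E_M(x)$ $\sigma$-weakly as $f\to\delta_e$ because the action is continuous and $E_M(x)\in M$; passing to the limit gives $E_M(E_\al(x))=E_M(x)$, whence $\vph^\om\circ E_\al=\vph\circ E_M\circ E_\al=\vph\circ E_M=\vph^\om$. Faithful normality of $E_\al$ then comes for free: taking $\vph$ a faithful normal state, $\vph^\om$ is a faithful normal state on $M^\om$ preserved by $E_\al$, and the standard increasing-net argument (if $x_i\nearrow x$ then $E_\al(x_i)\nearrow y\le E_\al(x)$ with $\vph^\om(E_\al(x)-y)=0$, so $y=E_\al(x)$ by faithfulness) shows normality. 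Uniqueness is then formal, since a faithful-normal-state-preserving conditional expectation onto a fixed subalgebra is unique: the relation $\vph^\om(E(x)^*a)=\vph^\om(x^*a)$ for all $a\in M_\al^\om$ determines $E(x)\in M_\al^\om$.

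I expect the convergence/well-definedness step to be the main obstacle, as it is the only place where the defining property of the equicontinuous part is genuinely used, through the interplay of the $L^1$-continuity estimate $\al_{g*f}^\om(x)\to\al_g^\om(x)$ and the equicontinuity convergence $\al_g^\om(z)\to z$ on $M_\al^\om$. Once convergence is secured, the conditional-expectation property (via Tomiyama), the weight invariance (via $E_M\circ E_\al=E_M$), normality (via faithfulness of $\vph^\om$), and uniqueness are all routine.
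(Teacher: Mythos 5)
Your architecture---averaging along an approximate identity of $L^1(G)$, Tomiyama's theorem for the conditional-expectation property, the identity $E_M\circ E_\al=E_M$ for simultaneous preservation of all weights, then normality and uniqueness from faithfulness of $\vph^\om$---is genuinely different from the paper's proof, and the second half of it (the cluster-point argument inside $M_\al^\om$, Tomiyama, weight preservation, normality, uniqueness) is correct as written. But there is a genuine gap at the very first step: you need the maps $\al_f^\om$ to be defined on all of $M^\om$, i.e.\ you need that $(x^\nu)\in\sM^\om(M)$ implies $(\al_f(x^\nu))\in\sM^\om(M)$, and that $(z^\nu)\in\sI_\om(M)$ implies $(\al_f(z^\nu))^\om=0$, so that $x\mapsto(\al_f(x^\nu))^\om$ is independent of the representative. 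The averaging remark you cite does not give this: its precise content is that $(\al_f(x^\nu))$ is $(\al,\om)$-equicontinuous for bounded $(x^\nu)$, and that $\al_f^\om(x)=(\al_f(x^\nu))^\om$ for $x$ \emph{already} in $M_\al^\om$, i.e.\ for sequences already in $\sE_\al^\om(M)\cap\sM^\om(M)$; the density assertion that follows is obtained by averaging elements of $M_\al^\om$ only. For a general element of $\sM^\om(M)$, both facts you need require interchanging $\lim_{\nu\to\om}$ with $\int_G f(s)\,ds$, and the absence of equicontinuity of $(x^\nu)$ is exactly what makes such interchanges fail---this is the whole reason $M_\al^\om$ is a proper subalgebra. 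Concretely, for $(a^\nu)\in\sI_\om(M)$ you must show $a^\nu\al_f(x^\nu)\to0$ $\si$-strongly along $\om$; pointwise in $s$ one does have $a^\nu\al_s(x^\nu)\to0$ (each $(\al_s(x^\nu))$ is again a multiplier sequence), but integrating this against $f$ needs uniformity in $s$, which is not available. These statements can in fact be proved---the $\sI_\om$ statement via norm compactness of $\{\ph\circ\al_s : s\in\supp f\}$ in $M_*$, the $\sM^\om$ statement via a uniformized version of the multiplier property obtained from the ultrafilter dichotomy together with separability of $M_*$---but that work is the real mathematical content of your approach, and it is missing.

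By contrast, the paper never extends averaging beyond $M_\al^\om$ and so avoids this issue entirely: it fixes a faithful normal semifinite weight $\vph$, uses $\si_t^{\vph^\om}((x^\nu)^\om)=(\si_t^\vph(x^\nu))^\om$ and the cocycle identity $\al_s\circ\si_t^\vph=\Ad\bigl([D\vph\circ\al_{s^{-1}}:D\vph]_t\bigr)\circ\si_t^\vph\circ\al_s$ to check that $M_\al^\om$ is globally invariant under $\si^{\vph^\om}$, and then invokes Takesaki's theorem, which delivers existence, $\vph^\om$-preservation and uniqueness in one stroke; the passage from one weight to all weights is then the same observation you make, namely $E_M=E_M\circ E_\al$. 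So your plan is salvageable, but only after you supply the missing lemma that averaging maps $\sM^\om(M)$ into $\sE_\al^\om(M)\cap\sM^\om(M)$ and kills $\sI_\om(M)$; as it stands, the construction of $E_\al$ does not get off the ground.
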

\begin{proof}
Since $M_\al^\om$ contains $M$,
$\vph^\om$ is semifinite on $M_\al^\om$.
We will show that $M_\al^\om$ is globally invariant
by $\sigma^{\vph^\om}$.
By \cite[Theorem 4.1]{AH}, \cite[Proposition 2.1]{K}
or \cite[Theorem 2.1]{R},
we obtain $\sigma_t^{\vph^\om}((x^\nu)^\om)=(\sigma_t^\vph(x^\nu))^\om$
for $(x^\nu)^\om\in M^\om$.
Then for $t\in\R$, $s\in G$, $(x^\nu)^\om\in M_\al^\om$ and $\nu\in\N$,
we have
\[
\al_s(\sigma_t^\vph(x^\nu))
=
\sigma_t^{\vph\circ\al_{s^{-1}}}(\al_s(x^\nu))
=
[D\vph\circ\al_{s^{-1}}:D\vph]_t
\sigma_t^\vph(\alpha_s(x^\nu))
[D\vph\circ\al_{s^{-1}}:D\vph]_t^*.
\]
This implies that $(\sigma_t^\vph(x^\nu))$ is $(\al,\om)$-equicontinuous
for each $t\in\R$
since $(x^\nu)$ is an element of $\mathscr{M}^\om(M)$.
(See \cite[Lemma 3.6]{MT-Roh}.)
Hence $M_\al^\om$ is globally invariant by $\si^{\vph^\om}$.
Thanks to Takesaki's criterion \cite[p.309]{Ta-ex},
we can take a faithful normal conditional expectation
$E_\al$ from $M^\om$ onto $M_\al^\om$
so that $\vph^\om=\vph^\om\circ E_\al$.
This equality implies that $E_M=E_M\circ E_\al$,
and $E_\al$ is unique.
\end{proof}

\subsection{Main results}

The canonical embedding $\pi_\al$ of $M$ into $M\rti_\al G$
induces
$\pi_\al^\infty\col \sE_\al^\om(M)\cap\sM^\om(M)\to \ell^\infty(M\rti_\al G)$
by putting $\pi_\al^\infty((x^\nu)):=(\pi_\al(x^\nu))$.

\begin{lem}
\label{lem:pial}
If $(x^\nu)\in \sE_\al^\om(M)\cap\sM^\om(M)$,
then $\pi_\al^\infty((x^\nu))\in \sM^\om(M\rti_\al G)$.
\end{lem}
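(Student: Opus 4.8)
The plan is to transfer the problem to the modular flow of one carefully chosen faithful normal state on $N:=M\rti_\al G$. Since $M$ has separable predual and $G$ is second countable, $N$ has separable predual too, so the crucial fact recalled above applies verbatim to $N$: for any faithful normal state $\psi$ on $N$ one has $\sM^\om(N)=\sE^\om_{\sigma^\psi}(N)$. Hence it suffices to exhibit one state $\psi$ for which $\pi_\al^\infty((x^\nu))$ is $(\sigma^\psi,\om)$-equicontinuous. It is essential to use a genuine state rather than the dual weight directly, because the identity $\sM^\om=\sE^\om_{\sigma^{(\cdot)}}$ fails for weights (for a tracial weight on an infinite algebra the modular group is trivial while $\sM^\om\subsetneq\ell^\infty$); the state must be chosen so that its modular flow on $\pi_\al(M)$ is explicitly computable.

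I would build $\psi$ by localizing the dual weight in the group direction. Fix a faithful normal state $\vph$ on $M$ with dual weight $\widetilde\vph$, and choose a faithful positive density $h$ affiliated with $\la^\al(G)''$, lying in the centralizer of $\widetilde\vph$ and normalized by $\widetilde\vph(h)=1$; set $\psi:=\widetilde\vph_h$. Then $\psi$ is a faithful normal state, and because $h$ is $\widetilde\vph$-central the perturbation formula combined with the dual-weight relation $\sigma^{\widetilde\vph}_t\circ\pi_\al=\pi_\al\circ\sigma^\vph_t$ gives
\[
\sigma^\psi_t(\pi_\al(x))=h^{it}\,\pi_\al(\sigma^\vph_t(x))\,h^{-it},\qquad x\in M .
\]
The reason for taking $h\in\la^\al(G)''$ is that then $h^{it}=\la^\al(\mu_t)$ for a family $\mu_t$ with $\mu_t\to\delta_e$ as $t\to0$, and the covariance $\la^\al(s)\pi_\al(x)=\pi_\al(\al_s(x))\la^\al(s)$ turns conjugation by $h^{it}$ into an average along the action:
\[
[\,h^{it},\pi_\al(x)\,]=\int_G \mu_t(s)\,\pi_\al\!\big(\al_s(x)-x\big)\,\la^\al(s)\,ds .
\]

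Now the estimate splits so that each hypothesis governs exactly one term:
\[
\sigma^\psi_t(\pi_\al(x^\nu))-\pi_\al(x^\nu)
=\underbrace{h^{it}\,\pi_\al\!\big(\sigma^\vph_t(x^\nu)-x^\nu\big)\,h^{-it}}_{(A)}
+\underbrace{[\,h^{it},\pi_\al(x^\nu)\,]\,h^{-it}}_{(B)} .
\]
For $(A)$ one uses $(x^\nu)\in\sM^\om(M)=\sE^\om_{\sigma^\vph}(M)$: writing $\om_0:=\psi\circ\pi_\al\in M_*^+$ and $c^\nu:=\sigma^\vph_t(x^\nu)-x^\nu$, the identity $\psi\circ\Ad(h^{it})=\psi\circ\sigma^{\widetilde\vph}_{-t}$ yields $(\|(A)\|^\#_\psi)^2=(\om_0\circ\sigma^\vph_{-t})\big((c^\nu)^\ast c^\nu+c^\nu(c^\nu)^\ast\big)$, and since $\om_0\circ\sigma^\vph_{-t}\to\om_0$ in norm while $c^\nu\to0$ $\si$-strong$*$ uniformly for $\nu$ in some $A\in\om$ as $t\to0$, this is uniformly small. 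For $(B)$ one uses $(\al,\om)$-equicontinuity: the integral concentrates near $e$ as $\mu_t\to\delta_e$, and there $\pi_\al(\al_s(x^\nu)-x^\nu)$ is $\si$-strong$*$ small uniformly for $\nu$ in some $A\in\om$, so $\|(B)\|^\#_\psi$ is uniformly small as well. Thus $\pi_\al^\infty((x^\nu))\in\sE^\om_{\sigma^\psi}(N)=\sM^\om(N)$.

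The principal obstacle is the construction and bookkeeping of $\psi$: one must secure a faithful positive $\widetilde\vph$-integrable density $h$ that is simultaneously affiliated with $\la^\al(G)''$ (making $\Ad(h^{it})$ the averaging above) and $\widetilde\vph$-central (making $\sigma^\psi_t=\Ad(h^{it})\circ\sigma^{\widetilde\vph}_t$ exact), and one must convert the two $\si$-strong$*$ smallness statements on $M$ into uniform $\|\cdot\|^\#_\psi$ smallness on $N$, absorbing the factors $\la^\al(s)$ and $h^{-it}$. This is exactly where both hypotheses are indispensable: the $\sM^\om(M)$-hypothesis controls $(A)$ and the $(\al,\om)$-equicontinuity controls $(B)$. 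Indeed, when $M$ is finite one has $\sM^\om(M)=\ell^\infty(M)$, so only $(\al,\om)$-equicontinuity constrains the sequence, and term $(B)$ is precisely where it enters.
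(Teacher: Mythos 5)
Your overall strategy---pass to $N:=M\rti_\al G$, pick a faithful normal \emph{state} $\psi$ on $N$, and verify $(\si^\psi,\om)$-equicontinuity of $(\pi_\al(x^\nu))$ so as to invoke $\sM^\om(N)=\sE^\om_{\si^\psi}(N)$---is a legitimate alternative to a direct verification, but the construction on which your whole argument rests does not exist in general. You need a faithful positive $h$ with $\widetilde{\vph}(h)=1$ that is \emph{simultaneously} affiliated with $\la^\al(G)''$ and fixed by $\si^{\widetilde{\vph}}$. By the standard dual-weight formula (recorded in the proof of Theorem \ref{thm:cross}),
\[
\si_t^{\widetilde{\vph}}(\la^\al(s))
=\Delta_G(s)^{it}\,\la^\al(s)\,\pi_\al([D\vph\circ\al_s:D\vph]_t),
\]
so $\si^{\widetilde{\vph}}$ does not even leave $\la^\al(G)''$ globally invariant unless the cocycles vanish; an element of $\la^\al(G)''$ fixed by $\si^{\widetilde{\vph}}$ must have its Fourier support contained in $G_0:=\{s\in G:\Delta_G(s)=1,\ \vph\circ\al_s=\vph\}$. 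Whenever $G_0=\{e\}$, the centralizer of $\widetilde{\vph}$ meets $\la^\al(G)''$ only in $\C 1$, and $\C1$ contains no faithful positive operator of finite dual weight when $G$ is non-discrete (since $\widetilde{\vph}(1)=\infty$). This situation is not exotic and cannot be avoided by a clever choice of $\vph$: in the paper's own Example \ref{exam:G} with $G=\R$ (translation on $M=L^\infty(\R)$), \emph{no} faithful normal state is invariant under any $\al_s$, $s\neq e$, so $G_0=\{e\}$ for every $\vph$; concretely, there $M\rti_\al\R\cong B(L^2(\R))$, $\widetilde{\vph}=\Tr(\pi_\al(\rho)\,\cdot\,)$ with $d\vph=\rho\,dx$, and one checks that no nonscalar convolution operator commutes with $\pi_\al(\rho)$ and all translations. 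So your state $\psi$ simply cannot be built in the very cases the lemma must cover.

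Nor is the affiliation of $h$ to $\la^\al(G)''$ a dispensable convenience that you could trade away: an $h$ affiliated merely with the centralizer of $\widetilde{\vph}$ always exists ($\widetilde{\vph}$ restricts to an f.n.s.\ trace there), but then your term $(B)$ asks that $[h^{it},\pi_\al(x^\nu)]\to0$ $\si$-strong$*$ uniformly in $\nu$ along $\om$, knowing only that the unitaries $h^{it}$ converge to $1$ strongly as $t\to0$. Strong convergence of unitaries gives no such uniformity over norm-bounded sequences; demanding it is precisely demanding $(\Ad h^{it},\om)$-equicontinuity of the image sequence, i.e.\ a piece of the conclusion, so the argument becomes circular. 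Your escape---$h^{it}=\la^\al(\mu_t)$ plus the covariance relation---is exactly what centrality forbids; and even in the special cases where it is permitted ($\vph$ $\al$-invariant, $G$ unimodular), $\mu_t$ is only a distribution, and the concentration estimate $\int_{G\setminus U}|\mu_t|\to0$ that your bound for $(B)$ needs is unjustified. For comparison, the paper's proof avoids modular theory entirely: it tests $y^\nu\pi_\al(x^\nu)$ against vectors $\xi\oti f$, $f\in C_c(G)$, uses $(\al,\om)$-equicontinuity and a partition of unity to approximate $\pi_\al(x^\nu)$ on such vectors by finite sums $\sum_j\al_{s_j^{-1}}(x^\nu)\oti h_j$, and then uses that each $(\al_{s_j^{-1}}(x^\nu)\oti1)$ lies in $\sM^\om(M\oti B(L^2(G)))$; that elementary localization resolves exactly the difficulty your term $(B)$ runs into, without choosing any state.
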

\begin{proof}
Let $(y^\nu)$ be an $\om$-trivial sequence in $M\rti_\al G$
with $\|y^\nu\|\leq1$ for all $\nu$.
It suffices to show that
$\|y^\nu \pi_\al(x^\nu)(\xi\oti f)\|\to0$
as $\nu\to\om$
for $\xi\in H$ and $f\in C_c(G)$
with compact support $K\subs G$.

Let $\vep>0$.
Since $(x^\nu)$ is $(\al,\om)$-equicontinuous,
we can take $W\in \om$
and a open neighborhood
$V$ of $e\in G$
so that if $t^{-1}s\in V$, $s,t\in K$
and $\nu\in W$,
then
$\|\al_{s^{-1}}(x^\nu)\xi-\al_{t^{-1}}(x^\nu)\xi\|<\vep$.

Take $s_1,\dots,s_N\in K$ so that
$K\subs s_1 V\cup\cdots\cup s_N V$.
Let $\{h_1,\dots,h_N\}$ be a partition of unity on $K$
subordinate to the cover $\{s_1 V,\dots,s_N V\}$.
(See \cite[Theorem 2.13]{Ru}.)
Then for $\nu\in W$,
we obtain the following:
\begin{align*}
&\left
\|\Big{(}
\pi_\al(x^\nu)-\sum_{j=1}^{N}(\al_{s_j^{-1}}(x^\nu)\oti h_j)
\Big{)}(\xi\oti f)\right\|^2
\\
&=
\int_{K}
\Big{\|}
\Big{(}
\al_{s^{-1}}(x^\nu)-\sum_{j=1}^N
\al_{s_j^{-1}}(x^\nu)h_j(s)
\Big{)}\xi
\Big{\|}^2
|f(s)|^2\,ds
\\
&=
\int_{K}
\Big{\|}
\sum_{j=1}^N
h_j(s)
(\al_{s^{-1}}(x^\nu)-
\al_{s_j^{-1}}(x^\nu))
\xi
\Big{\|}^2
|f(s)|^2\,ds
\\
&\leq
\int_{K}
\Big{(}
\sum_{j=1}^N
h_j(s)
\|
\al_{s^{-1}}(x^\nu)-
\al_{s_j^{-1}}(x^\nu)
\xi\|
\Big{)}^2
|f(s)|^2\,ds
\\
&\leq
\vep^2\|f\|_2^2.
\end{align*}

Thus for all $\nu\in W$,
we have
\[
\|y^\nu \pi_\al(x^\nu)(\xi\oti f)\|
\leq
\vep\|f\|_2
+
\left\|y^\nu \sum_{j=0}^{N-1}(\al_{s_j^{-1}}(x^\nu)\oti h_j)(\xi\oti f)
\right\|.
\]
In the last term,
we know that $(\al_{s_j^{-1}}(x^\nu)\oti 1)$
belongs to $\sM^\om(M\oti B(L^2(G)))$
by the proof of \cite[Lemma 2.8]{MT-Roh}.
In particular,
the last term converges to 0
in the strong topology
as $\nu\to\om$.
Hence the above inequality implies that
\[
\lim_{\nu\to\om}\|y^\nu \pi_\al(x^\nu)(\xi\oti f)\|
\leq
\vep\|f\|_2.
\]
Thus we are done.
\end{proof}

The map $\pi_\al^\infty$ induces
a well-defined map $\pi_\al^\om$
from $M_\al^\om$ into $(M\rti_\al G)^\om$
such that $\pi_\al^\om((x^\nu)^\om):=(\pi_\al(x^\nu))^\om$
for $(x^\nu)^\om\in M_\al^\om$.
In the proof of Lemma \ref{lem:pial},
we have shown $\pi_\al^\om$ is actually
a map from $M_\al^\om$ into $(M\otimes B(L^2(G)))^\om$.
Recall the isomorphism $\Psi$ from
$(M\oti B(L^2(G)))^\om$ onto $M^\om\oti B(L^2(G))$
that is given in the proof of \cite[Lemma2.8]{MT-Roh}.
Note that the map $\Psi$ is naturally defined
so that
for $f,g\in L^2(G)$ and $x=(x^\nu)^\om\in (M\oti B(L^2(G)))^\om$,
we have
$(\id\oti \phi_{f,g})(\Psi(x))=((\id\oti\phi_{f,g})(x^\nu))^\om$,
where $\phi_{f,g}$ denotes the normal functional
$\langle \cdot f,g\rangle$ on $B(L^2(G))$.

\begin{lem}
\label{lem:psial}
For any $x\in M_\al^\om$,
one has $\Psi(\pi_\al^\om(x))=\pi_{\al^\om}(x)$.
\end{lem}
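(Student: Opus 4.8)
The plan is to prove the identity by testing both sides against the slice maps $\id\oti\phi_{f,g}$, $f,g\in L^2(G)$. Both $\Psi(\pi_\al^\om(x))$ and $\pi_{\al^\om}(x)$ are elements of $M^\om\oti B(L^2(G))$: the former because $\pi_\al^\om$ maps $M_\al^\om$ into $(M\oti B(L^2(G)))^\om$ (shown in the proof of Lemma~\ref{lem:pial}) followed by $\Psi$, and the latter because $\pi_{\al^\om}(x)$ is by definition an operator in $M^\om\oti B(L^2(G))$. Since the linear span of $\{\phi_{f,g}\mid f,g\in L^2(G)\}$ is $\si$-weakly dense in the predual of $B(L^2(G))$, an element of $M^\om\oti B(L^2(G))$ is determined by the family of its slices. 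Hence it suffices to verify
\[
(\id\oti\phi_{f,g})(\Psi(\pi_\al^\om(x)))=(\id\oti\phi_{f,g})(\pi_{\al^\om}(x))\qquad\text{for all }f,g\in L^2(G).
\]

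For the left-hand side I would first invoke the defining property of $\Psi$ recalled just above, applied to $\pi_\al^\om(x)=(\pi_\al(x^\nu))^\om$, which gives
\[
(\id\oti\phi_{f,g})(\Psi(\pi_\al^\om(x)))=\big((\id\oti\phi_{f,g})(\pi_\al(x^\nu))\big)^\om .
\]
Next, using that $\pi_\al(x^\nu)$ is the decomposable operator $(\pi_\al(x^\nu)\xi)(t)=\al_{t^{-1}}(x^\nu)\xi(t)$, a direct computation with vector functionals $\omega_{\xi,\eta}\oti\phi_{f,g}$ identifies the slice as
\[
(\id\oti\phi_{f,g})(\pi_\al(x^\nu))=\int_G \al_{t^{-1}}(x^\nu)\,f(t)\overline{g(t)}\,dt .
\]
Writing $k(s):=f(s^{-1})\overline{g(s^{-1})}\Delta(s^{-1})$ and changing variables $s=t^{-1}$ (with the inversion formula for the Haar measure), the integral becomes the $L^1$-averaging $\al_k(x^\nu)$, where $k\in L^1(G)$ since $\int_G|k|=\|f\overline g\|_1\le\|f\|_2\|g\|_2<\infty$.

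For the right-hand side, $\pi_{\al^\om}(x)$ is the decomposable operator determined by $(\pi_{\al^\om}(x)\eta)(t)=\al_{t^{-1}}^\om(x)\eta(t)$, so the identical slice computation carried out in $M^\om$ yields
\[
(\id\oti\phi_{f,g})(\pi_{\al^\om}(x))=\int_G \al_{t^{-1}}^\om(x)\,f(t)\overline{g(t)}\,dt=\al_k^\om(x) .
\]
The two computations are reconciled by the remark preceding Example~\ref{exam:G}: because $x\in M_\al^\om$ the sequence $(x^\nu)$ is $(\al,\om)$-equicontinuous, and for such sequences averaging by an $L^1$-function commutes with the ultraproduct, i.e.\ $\al_k^\om(x)=(\al_k(x^\nu))^\om$. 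Combining the three displays gives equality of all slices, hence $\Psi(\pi_\al^\om(x))=\pi_{\al^\om}(x)$.

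The step I expect to be the crux is the interchange $\al_k^\om(x)=(\al_k(x^\nu))^\om$ of the $L^1$-integral with the quotient map onto $M^\om$: this is exactly where equicontinuity of $(x^\nu)$ is indispensable (for a general norm bounded sequence the averaged and the ultraproduct sides need not agree), and it is precisely the content supplied by the cited remark, so no new work is needed there. Two routine but necessary bookkeeping points are the $\si$-weak density of $\spa\{\phi_{f,g}\}$, which legitimises reducing to slices in the von Neumann tensor product, and the appearance of the modular factor $\Delta(s^{-1})$ when the slice $\int_G\al_{t^{-1}}(\cdot)\,f(t)\overline{g(t)}\,dt$ is rewritten as a genuine averaging $\al_k(\cdot)$.
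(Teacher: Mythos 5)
Your proposal is correct and takes essentially the same route as the paper: both arguments reduce the identity to the slice maps $\id\oti\phi_{f,g}$, handle $\Psi(\pi_\al^\om(x))$ via the defining property of $\Psi$, and handle $\pi_{\al^\om}(x)$ by using equicontinuity of $(x^\nu)$ to commute the integral $\int_G f(s)\ovl{g(s)}\,\al_{s^{-1}}(\cdot)\,ds$ with the passage to the ultraproduct. Your additional bookkeeping (the change of variables recasting the slice as an $L^1$-averaging $\al_k$, with the modular factor $\Delta(s^{-1})$) is a harmless, slightly more explicit rendering of the same step that the paper dispatches by citing equicontinuity.
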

\begin{proof}
Let $f,g\in L^2(G)$ and $x=(x^\nu)^\om\in M_\al^\om$.
On the one hand,
we have
\[
(\id\oti \phi_{f,g})(\Psi(\pi_\al^\om(x)))
=
\big{(}
(\id\oti\phi_{f,g})(\pi_\al(x^\nu))
\big{)}^\om.
\]
On the other hand,
using the equicontinuity
(cf. \cite[Lemma 3.15]{MT-Roh}),
we have
\begin{align*}
(\id\oti \phi_{f,g})(\pi_{\al^\om}(x))
&=
\int_G f(s)\ovl{g(s)}\al_{s^{-1}}^\om(x)\,ds
=
\Big{(}
\int_G f(s)\ovl{g(s)}\al_{s^{-1}}(x^\nu)\,ds
\Big{)}^\om
\\
&=
\big{(}
(\id\oti\phi_{f,g})(\pi_\al(x^\nu))
\big{)}^\om.
\end{align*}
Thus we are done.
\end{proof}

We now prove the main result of this paper
which strengthens \cite[Theorem 1.10]{MT-discrete}.
Note that a generalization of Example \ref{exam:G}
to the crossed product for $G$ being abelian.

\begin{thm}
\label{thm:cross}
Let $\al$ be an action of
a second countable locally compact Hausdorff group $G$
on a von Neumann algebra $M$ with separable predual.
Then the following statements hold:
\begin{enumerate}
\item 
There exists a canonical embedding
$\Phi_\al$
of $M_\al^\om\rti_{\al^\om}G$
into $(M\rti_\al G)^\om$
such that
$\Phi_\al(\pi_{\al^\om}(x))
=
\pi_\al^\om(x)$
and
$\Phi_\al(\la^{\al^\om}(s))=\la^\al(s)^\om$,
respectively,
for all $x\in M_\al^\om$
and $s\in G$.

\item
If $G$ is abelian,
the map $\Phi_\al$
induces the isomorphism
from $M_\al^\om\rti_{\al^\om}G$
onto $(M\rti_\al G)_{\hal}^\om$.
\end{enumerate}
\end{thm}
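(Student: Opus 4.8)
The plan is to establish the two inclusions. Write $N:=M\rti_\al G$. First I would verify that $\Phi_\al$ maps into $(M\rti_\al G)_\hal^\om$. For $x=(x^\nu)^\om\in M_\al^\om$, Lemma~\ref{lem:pial} gives $(\pi_\al(x^\nu))\in\sM^\om(N)$, while $\hal_p(\pi_\al(x^\nu))=\pi_\al(x^\nu)$ shows $(\pi_\al(x^\nu))\in\sE_\hal^\om(N)$ trivially; hence $\pi_\al^\om(x)\in N_\hal^\om$. For $s\in G$ the constant sequence $\la^\al(s)$ lies in $\sM^\om(N)$, and $\|\hal_p(\la^\al(s))-\la^\al(s)\|=|\ovl{\langle s,p\rangle}-1|\to0$ as $p\to e$, so $\la^\al(s)^\om\in N_\hal^\om$. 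As $\Phi_\al$ is an injective normal $*$-homomorphism, its range is a $\si$-weakly closed subalgebra of $N_\hal^\om$, so it suffices to prove that $\Phi_\al(M_\al^\om\rti_{\al^\om}G)$ is $\si$-weakly dense in $(M\rti_\al G)_\hal^\om$.

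For density I would use the averaging description of the equicontinuous part (the analogue for $(N,\hal)$ of the density statement recorded just before Example~\ref{exam:G}): $(M\rti_\al G)_\hal^\om$ is the $\si$-weak closure of the set of $(\hal_h(y^\nu))^\om$ with $h\in L^1(\widehat G)$ and $(y^\nu)\in\sM^\om(N)$. Approximating $h$ in $L^1(\widehat G)$ by functions whose Fourier transform has compact support (Fej\'er-type kernels), I reduce to the case $\hat h\in C_c(G)$. Realizing $N\subseteq M\oti B(L^2(G))$ so that $\hal_p=\Ad(1\oti m_p)$ for the multiplication unitary $m_p$ by $\ovl{\langle\cdot,p\rangle}$, averaging by $\hal_h$ multiplies the $L^2(G)$-kernel of $y^\nu$ by $\hat h(t^{-1}r)$; for $\hat h\in C_c(G)$ this produces a band-limited element
\[
\hal_h(y^\nu)=\int_L\pi_\al(x^\nu(w))\la^\al(w)\,dw,\qquad L:=\supp\hat h,
\]
with compact $L$ and $M$-valued coefficient functions $w\mapsto x^\nu(w)$; since $G$ is abelian, $\Ad\la^\al(s)$ acts on $N$ as $\al_s$ on these coefficients.

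The crux is to show that for each $w$ the coefficient sequence $(x^\nu(w))_\nu$ belongs to $\sE_\al^\om(M)\cap\sM^\om(M)$, with estimates uniform in $w\in L$. Granting this, the classes $\xi(w):=(x^\nu(w))^\om$ define a compactly supported $\si$-weakly continuous map $\xi\col L\to M_\al^\om$, and the uniform compact support permits interchanging $\int_L dw$ with the ultraproduct, so that
\[
(\hal_h(y^\nu))^\om=\int_L\pi_\al^\om(\xi(w))\la^\al(w)^\om\,dw
=\Phi_\al\Big(\int_L\pi_{\al^\om}(\xi(w))\la^{\al^\om}(w)\,dw\Big).
\]
The right-hand integrand lies in $M_\al^\om\rti_{\al^\om}G$, so the dense generating set is contained in the range of $\Phi_\al$; this yields the density and hence surjectivity onto $(M\rti_\al G)_\hal^\om$.

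I expect the coefficient claim to be the main obstacle, as it is essentially a converse to Lemma~\ref{lem:pial}: band-limiting an element of $\sM^\om(N)$ must return equicontinuous multiplier coefficients in $M$. I would prove it using the equicontinuity estimate of Lemma~\ref{lem:pial} together with the characterization $\sM^\om(N)=\sE_{\si^\ps}^\om(N)$ for a faithful normal state $\ps$ on $N$, which lets one transfer equicontinuity from $(y^\nu)$ to its slices $(x^\nu(w))$. The delicate point throughout is keeping every estimate uniform in $w\in L$, so that the integral can legitimately be pulled through the ultralimit.
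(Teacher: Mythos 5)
There are two genuine gaps here, and the second one is fatal to the strategy rather than just an omission. First, your argument never proves part (1); it presupposes it. You start from ``$\Phi_\al$ is an injective normal $*$-homomorphism, so its range is $\si$-weakly closed,'' but the existence of such a map is precisely the content of (1): a priori it is not clear that the von Neumann algebra $\pi_\al^\om(M_\al^\om)\vee\{\la^\al(s)^\om\mid s\in G\}''$ sitting inside $(M\rti_\al G)^\om$ satisfies no relations beyond those of the abstract crossed product $M_\al^\om\rti_{\al^\om}G$. The paper devotes the bulk of the proof to this point: it shows this algebra $N$ is globally invariant under $\si^{\ps^\om}$ ($\ps$ the dual weight of $\vph$), compares the GNS space of the dual weight $\chi$ of $\vph^\om\!\upharpoonright_{M_\al^\om}$ with $L^2(N,\ps^\om)$ via an explicit isometry $V$ defined on vectors $\La_\chi(\la^{\al^\om}(f)\pi_{\al^\om}(x))$, $f\in C_c(G)$, proves $V$ is surjective using $N'$-invariance of its range, and only then obtains $\Phi_\al$ as the inverse of $x\mapsto V^*xV$. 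None of this can be skipped.

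Second, the crux of your density argument for (2) rests on the formula $\hal_h(y^\nu)=\int_L\pi_\al(x^\nu(w))\la^\al(w)\,dw$ with $M$-valued coefficient functions, and this decomposition does not exist for non-discrete $G$. There is no normal conditional expectation of $M\rti_\al G$ onto $\pi_\al(M)$, hence no Fourier coefficients: an element whose $\hal$-Arveson spectrum lies in a compact set $L$ has, in general, only an $M$-valued \emph{distribution} supported in $L$ as its ``Fourier transform.'' Already for $M=\C$, $G=\R$, where $M\rti_\al G\cong L^\infty(\widehat{\R})$ with $\hal$ acting by translation, the element $\hal_h(\la^\al(w_0))=\widehat{h}(w_0)\la^\al(w_0)$ is band-limited with Fourier transform a point mass at $w_0$, so it admits no representation $\int_L x(w)\la^\al(w)\,dw$ with an integrable density; moreover a generic $y^\nu$ is not an integral operator, so ``the $L^2(G)$-kernel of $y^\nu$'' you propose to multiply by $\widehat{h}$ is not defined. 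Consequently the sequences $(x^\nu(w))_\nu$ whose equicontinuity you flag as the main obstacle do not even exist, and no uniformity-in-$w$ estimate can repair this. Your scheme is essentially the correct proof for \emph{discrete} groups (it is how \cite[Theorem 1.10]{MT-discrete} goes, and how the paper reproves the Ando--Haagerup theorem using $y=\sum_m y(m)\la^\th(m)$ for $G=\Z$); the entire difficulty of the present theorem is that this breaks down for continuous $G$. The paper's route is different in kind: having (1) for \emph{every} action, it applies (1) to the dual action $\be:=\hal$ on $R:=M\rti_\al G$, then uses Takesaki duality $\Ga_\al$ together with the isomorphism $\Psi\col(M\oti B(L^2(G)))_{\al\oti\Ad\rho}^\om\to M_\al^\om\oti B(L^2(G))$ to form a composite map $f$ of $M_\al^\om\oti B(L^2(G))$ into itself, checks $f=\id$ on the generators $\pi_{\al^\om}(M_\al^\om)$, $1\oti\la(G)$, $1\oti L^\infty(G)$, and deduces surjectivity of $\Phi_\al\oti\id$; restricting to the fixed points of the bidual action then yields $\Phi_\al(M_\al^\om\rti_{\al^\om}G)=(M\rti_\al G)_{\hal}^\om$ without ever extracting Fourier coefficients.
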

\begin{proof}
(1).
Put $N:=\pi_\al^\om(M_\al^\om)\vee \{\la^\al(t)^\om\mid t\in G\}''$
that is a von Neumann subalgebra of $(M\rti_\al G)_{\hal}^\om$.
We will show that there exists a canonical isomorphism
from $M_\al^\om\rti_{\al^\om}G$ onto $N$.

Let $\vph$ be a faithful normal semifinite weight on $M$
and $\ps$ the dual weight of $\vph$ on $M\rti_\al G$.
It is obvious that $\ps^\om$ is semifinite on $N$
since $N$ contains $M\rtimes_\alpha G$.
Then for $(x^\nu)^\om\in M_\al^\om$, $s\in G$ and $t\in \R$,
we have
\[
\si_t^{\ps^\om}(\la^\al(s)^\om)
=
(\si_t^\ps(\la^\al(s)))^\om
=
\Delta_G(s)^{it}\la^\al(s)^\om
\pi_\al([D\vph\circ\al_s:D\vph]_t)^\om
\]
and
\begin{align*}
\si_t^{\ps^\om}(\pi_\al^\om((x^\nu)^\om))
&=
\si_t^{\ps^\om}((\pi_\al(x^\nu))^\om)
=
(\si_t^\ps(\pi_\al(x^\nu)))^\om
\\
&=
(\pi_\al(\si_t^\vph(x^\nu)))^\om
=
\pi_\al^\om((\si_t^\vph(x^\nu))^\om),
\end{align*}
where we note that
the last term is well-defined from
the proof of Lemma \ref{lem:Eal}.
This observation implies
$N$ is globally invariant under $\si^{\ps^\om}$.
Thanks to Takesaki's theorem,
we can take a faithful normal conditional expectation
from $(M\rti_\al G)^\om$ onto $N$.
In particular, the restriction of the modular conjugation $J_{\ps^\om}$
on $L^2(N,\ps^\om)$ gives the modular conjugation associated with
$\ps^\om\!\upharpoonright_{N}$.

Let $\chi$ be the dual weight of $\vph^\om\!\upharpoonright_{M_\al^\om}$
on $P:=M_\al^\om\rti_{\al^\om} G$.
We will compare the GNS Hilbert spaces $L^2(P,\chi)$
and $L^2(N,\ps^\om)$.
The definition left ideals are as usual
denoted by $n_\chi$ and $n_{\ps^\om}$, respectively.
(See \cite[Lemma VII.1.2]{TaII}.)
Denote by $\La_\chi\col n_\chi\to L^2(P,\chi)$
and $\La_{\ps^\om}\col n_{\ps^\om}\to L^2(N,\ps^\om)$
the canonical embeddings.

Let us introduce a map $V$ which maps
$\La_\chi(\la^{\al^\om}(f)\pi_{\al^\om}(x))$
to
$\La_{\ps^\om}(\la^\al(f)^\om\pi_\al^\om(x))$
for $f\in C_c(G)$ and $x\in M_\al^\om$.
We claim that $V$ extends to an isometry from $L^2(P,\chi)$
into $L^2(N,\ps^\om)$.
Take $f,g\in C_c(G)$ and $x,y\in M_\al^\om$.
Then we have
\begin{align*}
\langle\La_{\ps^\om}(\la^\al(f)^\om\pi_\al^\om(x)),
\La_{\ps^\om}(\la^\al(g)^\om\pi_\al^\om(y))
\rangle
&=
\ps^\om\big{(}
\pi_\al^\om(y^*)\la^\al(g^* *f)^\om\pi_\al^\om(x)
\big{)}
\\
&=
\ps\big{(}
\lim_{\nu\to\om}
\pi_\al((y^\nu)^*)
\la^\al(h)
\pi_\al(x^\nu)
\big{)},
\end{align*}
where $h:=g^* *f$.
Let $F$ be the support of $h$.
Then for each $\nu\in\N$, we have
\[
\pi_\al((y^\nu)^*)
\la^\al(h)
\pi_\al(x^\nu)
=
\int_F
h(s)
\pi_\al((y^\nu)^* \al_s(x^\nu))\la^\al(s)
\,ds.
\]
Using \cite[Lemma 3.3]{MT-Roh},
we know that
\[
\lim_{\nu\to\om}
\pi_\al((y^\nu)^*)
\la^\al(h)
\pi_\al(x^\nu)
=
\int_F
h(s)
\pi_\al(\lim_{\nu\to\om}y^\nu \al_s(x^\nu))\la^\al(s)
\,ds.
\]
Hence it follows from the definition of the dual weight $\psi$
the following:
\begin{align*}
\langle\La_{\ps^\om}(\la^\al(f)^\om\pi_\al^\om(x)),
\La_{\ps^\om}(\la^\al(g)^\om\pi_\al^\om(y))
\rangle
&=
h(e)
\vph(\lim_{\nu\to\om}(y^\nu)^*x^\nu)
\\
&=
h(e)\vph^\om(y^* x),
\end{align*}
which equals to
$\langle
\La_{\chi}(\la^{\al^\om}(f)\pi_{\al^\om}(x)),
\La_{\chi}(\la^{\al^\om}(g)\pi_{\al^\om}(y))
\rangle$ again by the definition of the dual weight $\chi$.
Thus we have proved the existence of the isometry $V$.

We next claim that $K$, the image of $V$, is $N'$-invariant.
For a $\si^{\vph^\om}$-analytic $y\in M_\al^\om$
and $t\in G$,
we obtain the followings
for all $f\in C_c(G)$ and $x\in M_\al^\om$:
\[
J_{\ps^\om}\si_{i/2}^{\ps^\om}(\pi_\al^\om(y))^*J_{\ps^\om}
\La_{\ps^\om}(\la^\al(f)^\om\pi_\al^\om(x))
=
\La_{\ps^\om}(\la^\al(f)^\om\pi_\al^\om(xy))\in K,
\]
and
\[
J_{\ps^\om}\si_{i/2}^{\ps^\om}(\la^\al(t)^\om)^*J_{\ps^\om}
\La_{\ps^\om}(\la^\al(f)^\om\pi_\al^\om(x))
=
\La_{\ps^\om}(\la^\al(g)^\om\pi_\al^\om(\al_{t^{-1}}^\om(x)))
\in K,
\]
where $g(s):=\Delta_G(t)^{-1}f(st^{-1})$.
Hence $K$ is $N'$-invariant.

Now let us take a $\si^{\ps^\om}$-analytic $y\in n_{\ps^\om}$.
Then
\[
J_{\ps^\om}\si_{i/2}^{\ps^\om}(y)^*J_{\ps^\om}
\La_{\ps^\om}(\la^\al(f)^\om\pi_\al^\om(x))
=
\la^\al(f)^\om\pi_\al^\om(x)\La_{\ps^\om}(y).
\]
This implies that
$\La_{\ps^\om}(y)$ is contained in the closure of $N'K$.
Since $N'K\subs K$,
$\La_{\ps^\om}(y)$ belongs to $K$.
Thus $K=L^2(N,\ps^\om)\subset L^2((M\rti_\al G)^\om,\ps^\om)$.
Then the map $N\ni x\mapsto V^*xV$
provides us with the isomorphism from $N$ onto
$M_\al^\om\rti_{\al^\om}G$.
More precisely,
we can check
$V^*\pi_\al^\om(x)V=\pi_{\al^\om}(x)$
and
$V^*\la^\al(s)^\om V=\la^{\al^\om}(s)$
for $x\in M_\al^\om$ and $s\in G$.
Denote by $\Phi_\alpha$ its inverse map.

(2).
Suppose that $G$ is abelian.
Then the image of $\Phi_\alpha$ is clearly contained in
$(M\rti_\al G)_{\hal}^\om$.
Let us apply the statement of (1) to
the dual action $\be:=\hal$ on $R:=M\rti_\al G$.
Then we have the embedding
$\Phi_{\be}$ of $R_\be^\om\rti_{\be^\om}\widehat{G}$
into
$(R\rti_\be \widehat{G})_{\hbe}^\om$.

Recall the isomorphism $\Psi$
from
$(M\oti B(L^2(G)))^\om$
onto
$M^\om\oti B(L^2(G))$
that is introduced in the remark before Lemma \ref{lem:psial}.
Then $\Psi$ induces
an isomorphism from
$(M\oti B(L^2(G)))_{\al\oti\Ad\rho}^\om$
onto
$M_\al^\om\oti B(L^2(G))$.
This fact can be directly proved or deduced from \cite[Lemma 3.12]{MT-Roh}
for general groups.
In summary, we have the following diagram:
\[
\xymatrix{
R_\be^\om\rti_{\be^\om}\widehat{G}\,
\ar@{^{(}->}[r]^{\Phi_\beta}
&
(R\rti_\be \widehat{G})_{\hbe}^\om
\ar[r]^(0.39){(\Ga_\al)^\om}
&
(M\oti B(L^2(G)))_{\al\oti\Ad\rho}^\om
\ar[d]^{\Psi}
\\
P\rti_{\widehat{\al^\om}}\widehat{G}
\ar@{^{(}->}[u]^{\Phi_\al\oti\id}
&
M_\al^\om\oti B(L^2(G))
\ar[l]_(0.57){(\Gamma_{\al^\om})^{-1}}
\ar@{.>}[r]^{f}
&
M_\al^\om\oti B(L^2(G))
}
\]
where
$(\Ga_\al)^\om$ is defined by
$(\Gamma_\al)^\om((x^\nu)^\om):=(\Gamma_\al(x^\nu))^\om$
for $(x^\nu)^\om\in (R\rti_\be \widehat{G})^\om$
and $f$ denotes the composition
of all of them.

We will show $f$ actually equals the identity map.
This implies the surjectivity of $\Phi_\al\oti\id$
in the diagram above,
and we obtain $\Phi_\al(P)=R_\be^\om$
by taking the fixed point algebra
of the dual action of $\be^\om$.
Recall that $M_\al^\om\oti B(L^2(G))$
is generated by $\pi_{\al^\om}(M_\al^\om)$,
$1\oti \la(G)$ and $1\oti L^\infty(G)$.
We can directly check that $f$ identically maps
$1\oti \la(G)$ and $1\oti L^\infty(G)$.
For $x\in M_\al^\om$,
it is not difficult to show $\pi_{\al^\om}(x)$
is mapped to $\pi_\al^\om(x)$
in $(M\oti B(L^2(G)))_{\al\oti\Ad\rho}^\om$,
and it turns out from Lemma \ref{lem:psial}
that
$f(\pi_{\al^\om}(x))=\pi_{\al^\om}(x)$.
\end{proof}

It would be interesting to generalize the previous theorem
to a general locally compact Hausdorff group or quantum group
by introducing the equicontinuity of their actions.

\section{Applications}

\subsection{Continuous or discrete crossed product decomposition of $M^\om$}

Let $M=N\rti_\th\R$ be the continuous crossed product decomposition
of a properly infinite von Neumann algebra $M$,
that is,
$N$ is a semifinite von Neumann algebra
that is
endowed with the $\R$-action $\th$
and 
a faithful normal tracial weight $\tau$
satisfying $\tau\circ\theta_s=e^{-s}\tau$ for $s\in\R$.
Let $\vph$ be the dual weight of $\tau$.
Since the dual action $\widehat{\theta}$ is nothing but
the modular automorphism $\si^\vph$,
the following result follows
from Theorem \ref{thm:cross},
\cite[Proposition 4.11]{AH}
and \cite[Theorem 1.5]{MT-discrete}.

\begin{thm}\label{thm:contdec}
Let $M=N\rti_\th\R$ be the continuous crossed product decomposition
of a properly infinite von Neumann algebra $M$.
Then the continuous crossed product decomposition of $M^\om$ is given by
$M^\om=N_\th^\om\rti_{\th^\om}\R$.
In particular, the flow of weights of $M^\om$
is given by the restriction of $\th^\om$ on $Z(N_\th^\om)$.
\end{thm}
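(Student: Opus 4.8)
The plan is to read off the decomposition of $M^\om$ from Theorem \ref{thm:cross} applied to the semifinite algebra $N$ and its trace-scaling flow $\th$, and then to recognise the output as a \emph{continuous} decomposition by exhibiting the rescaled trace explicitly.

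First I would apply part (2) of Theorem \ref{thm:cross} with $(M,\al,G)$ replaced by $(N,\th,\R)$; since $\R$ is abelian this produces an isomorphism
\[
\Phi_\th\col N_\th^\om\rti_{\th^\om}\R\;\xrightarrow{\ \cong\ }\;(N\rti_\th\R)_{\widehat\th}^\om=M_{\widehat\th}^\om,
\qquad
\pi_{\th^\om}(x)\mapsto\pi_\th^\om(x),\quad\la^{\th^\om}(s)\mapsto\la^\th(s)^\om .
\]
To dispose of the subscript I would use the hypothesis that $\widehat\th$ coincides with the modular flow $\si^\vph$ of the dual weight $\vph$, together with the equality $\sM^\om(M)=\sE_{\si^\vph}^\om(M)$ from \cite[Proposition 4.11]{AH} and \cite[Theorem 1.5]{MT-discrete}; the latter gives $M_{\si^\vph}^\om=M^\om$, whence $M_{\widehat\th}^\om=M^\om$ and
\[
M^\om\;\cong\;N_\th^\om\rti_{\th^\om}\R .
\]

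Next I would verify that this presentation satisfies the defining properties of a continuous crossed product decomposition, i.e.\ that $N_\th^\om$ is semifinite and $\th^\om$ rescales a faithful normal tracial weight. Feeding $\tau$ into Lemma \ref{lem:Eal} (for $(N,\th)$) yields the expectation $E_\th\col N^\om\to N_\th^\om$ with $\tau^\om=\tau^\om\circ E_\th$, and, exactly as in that lemma, the ultraproduct weight $\tau^\om$ restricts to a faithful normal semifinite weight on $N_\th^\om$ since $N\subs N_\th^\om$ and $\tau$ is already semifinite on $N$. Because $\tau$ is tracial we have $\si_t^\tau=\id$, so $\si_t^{\tau^\om}=(\si_t^\tau)^\om=\id$ on $N^\om$ and hence on $N_\th^\om$; thus $\tau^\om\!\upharpoonright_{N_\th^\om}$ is a \emph{trace} and $N_\th^\om$ is semifinite. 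For the scaling I would compute directly, using that $\th_s$ is normal and therefore commutes with the $\si$-weak ultralimit defining $E_N$:
\[
\tau^\om\circ\th_s^\om=\tau\circ\th_s\circ E_N=e^{-s}\,\tau\circ E_N=e^{-s}\,\tau^\om\qquad(s\in\R).
\]

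Finally, since $M^\om$ is properly infinite (the partial isometries witnessing proper infiniteness of $M$ are constant sequences, so the property passes to $M^\om$) and the continuous decomposition of such an algebra is unique up to isomorphism (see \cite{TaII}), the datum $(N_\th^\om,\th^\om,\tau^\om\!\upharpoonright_{N_\th^\om})$ must be the continuous decomposition of $M^\om$; in particular $N_\th^\om$ is its continuous core and the flow of weights is the restriction of the trace-scaling action $\th^\om$ to $Z(N_\th^\om)$. The step I expect to be the real content is not the bare isomorphism — which is handed to us by Theorem \ref{thm:cross} and the identity $M_{\si^\vph}^\om=M^\om$ — but the recognition that it is the \emph{continuous} decomposition, i.e.\ the trace computation above; this is precisely where the triviality of $\si^\tau$ and the semifiniteness statement of Lemma \ref{lem:Eal} do the work.
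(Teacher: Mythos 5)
Your proposal is correct and follows essentially the same route as the paper: the paper derives the theorem precisely by applying Theorem \ref{thm:cross}(2) to $(N,\th,\R)$ and then identifying $M_{\widehat\th}^\om=M_{\si^\vph}^\om=M^\om$ via \cite[Proposition 4.11]{AH} and \cite[Theorem 1.5]{MT-discrete}, since $\widehat\th=\si^\vph$ for the dual weight $\vph$ of $\tau$. The additional verifications you supply (semifiniteness of $N_\th^\om$ via $\tau^\om$, triviality of $\si^{\tau^\om}$, the scaling identity $\tau^\om\circ\th_s^\om=e^{-s}\tau^\om$, and uniqueness of the continuous decomposition for the properly infinite algebra $M^\om$) are exactly the routine details the paper leaves implicit, and they are carried out correctly.
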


The following result on a discrete crossed product decomposition
is proved first by Ando--Haagerup in \cite{AH}.
We will present another proof using Theorem \ref{thm:cross}.

\begin{thm}[Ando--Haagerup]
Let $M$ be a type III$_\lambda$ factor with
$0\leq \la<1$.
Let $M=N\rti_\th\Z$ be the discrete crossed product decomposition.
Then the discrete crossed product decomposition of $M^\om$
is given by
$M^\om=N^\om\rti_{\th^\om}\Z$.
In particular,
if $0<\la<1$,
then $M^\om$ is a type III$_\la$ factor.
\end{thm}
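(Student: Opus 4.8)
The plan is to apply Theorem \ref{thm:cross} to the datum $(N,\th,\Z)$ underlying the discrete decomposition and then to identify the two equicontinuous parts that arise. The decisive simplification is that $\Z$ is discrete: in the definition of $(\th,\om)$-equicontinuity one may take the neighbourhood of the neutral element to be $\{0\}$, so that $\th_0(x^\nu)-x^\nu=0$ lies in every $\si$-strong$*$ neighbourhood of $0$. Hence every norm bounded sequence is $(\th,\om)$-equicontinuous, i.e.\ $\sE_\th^\om(N)=\ell^\infty(N)$, and therefore $N_\th^\om=N^\om$. Since $\Z$ is abelian, Theorem \ref{thm:cross}(2), applied with $(M,\al,G)$ replaced by $(N,\th,\Z)$, then yields a canonical isomorphism
\[
N^\om\rti_{\th^\om}\Z\;\cong\;(N\rti_\th\Z)_{\widehat\theta}^\om=M_{\widehat\theta}^\om,
\]
sending $\pi_{\th^\om}(x)$ to $\pi_\th^\om(x)$ and $\la^{\th^\om}(n)$ to $\la^\th(n)^\om$.

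The heart of the matter is the identity $M_{\widehat\theta}^\om=M^\om$, equivalently $\sM^\om(M)\subs\sE_{\widehat\theta}^\om(M)$. Let $\vph$ be the dual weight on $M=N\rti_\th\Z$ of the tracial weight $\tau$ of $N$, and recall $\widehat\theta_q(\la^\th(n))=\bar q^{\,n}\la^\th(n)$ and $\widehat\theta_q(\pi_\th(x))=\pi_\th(x)$. When $0<\la<1$ the automorphism $\th$ scales $\tau$ by $\la$, so $\si^\vph$ is periodic and acts on each $\la^\th(n)$ by a scalar of modulus one; comparing the two formulas one finds a continuous open surjection $p\col\R\to\T$ with kernel $T_0\Z$, $T_0=2\pi/|\log\la|$, such that $\si_t^\vph=\widehat\theta_{p(t)}$. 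Because $p$ is an open surjection, equicontinuity for the $\T$-action $\widehat\theta$ and equicontinuity for $\si^\vph$ coincide, so the characterisation $\sE_{\si^\vph}^\om(M)=\sM^\om(M)$ (\cite[Proposition 4.11]{AH}, \cite[Theorem 1.5]{MT-discrete}) gives $M_{\widehat\theta}^\om=M^\om$. Combined with the first step, this is exactly the asserted decomposition $M^\om=N^\om\rti_{\th^\om}\Z$.

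For the final assertion, take $0<\la<1$. Granting the decomposition, $\th^\om$ scales the trace $\tau^\om$ of the semifinite algebra $N^\om$ by $\la\neq1$; consequently its nonzero powers are outer and act ergodically on $Z(N^\om)$, so $M^\om$ is a factor, and the standard discrete-decomposition criterion then identifies it as type III$_\la$. (Consistently, $\si_t^{\vph^\om}=(\si_t^\vph)^\om$ is again periodic of period $T_0$.) I expect the genuine obstacle to lie not in this formal bookkeeping but in the identity $M_{\widehat\theta}^\om=M^\om$ in the remaining case $\la=0$: there $\si^\vph$ is aperiodic and does not factor through $\T$, so $\widehat\theta$ differs essentially from $\si^\vph$ and the clean reparametrisation $p$ used above is unavailable. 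Proving that every element of $\sM^\om(M)$ is $(\widehat\theta,\om)$-equicontinuous must then be carried out by comparing the two actions through the flow of weights rather than by a direct identification of their orbits.
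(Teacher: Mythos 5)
Your treatment of the case $0<\la<1$ is correct and coincides with the paper's argument: discreteness of $\Z$ gives $\sE_\th^\om(N)=\ell^\infty(N)$, hence $N_\th^\om=N^\om$, Theorem \ref{thm:cross}(2) identifies $N^\om\rti_{\th^\om}\Z$ with $M_{\hat{\th}}^\om$, and for $\la\neq 0$ the dual action $\hat{\th}$ is a reparametrization of the modular flow $\si^{\hat{\tau}}$ through the continuous open surjection $t\mapsto e^{-it\log\la}$ of $\R$ onto $\T$, so that $\sE_{\hat{\th}}^\om(M)=\sE_{\si^{\hat{\tau}}}^\om(M)=\sM^\om(M)$ and $M_{\hat{\th}}^\om=M^\om$.

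The genuine gap is that the theorem is asserted for all $0\leq\la<1$, and you leave the case $\la=0$ unproven: you correctly single it out as the essential difficulty, but ``comparing the two actions through the flow of weights'' is a direction, not an argument, and it is not how the proof actually goes. The paper closes this case by a concrete mollifier--Fourier computation. One chooses a faithful normal tracial weight $\tau$ on $N$ with $\tau\circ\th\leq\mu\tau$, $0<\mu<1$, and defines $H_n$ affiliated with $Z(N)$ by $\tau\circ\th^n=\tau_{\exp(H_n)}$; then $\Sp(H_n)\subs(-\infty,n\log\mu]$ for $n\geq 1$ and $\Sp(H_n)\subs[n\log\mu^{-1},\infty)$ for $n\leq-1$, i.e.\ $|H_n|$ is bounded below by $|n||\log\mu|$. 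Given $x=(x^\nu)^\om\in M^\om$, the Gaussian averages $\si_{g_\be}^{\vph^\om}(x)=(\si_{g_\be}^\vph(x^\nu))^\om$ converge $\si$-strongly$*$ to $x$ as $\be\to\infty$, so it suffices to show each of them lies in $M_{\hat{\th}}^\om$. Writing $y=\sum_{m\in\Z}y(m)\la^\th(m)$ formally, one computes $\si_{g_\be}^\vph(y)=\sum_{m}y(m)\la^\th(m)\widehat{g_\be}(-H_m)$, and the spectral bound yields $\|\widehat{g_\be}(-H_m)\|\leq\exp(-m^2|\log\mu|^2/(4\be))$, so this series converges in norm, uniformly in $\nu$ when applied to $y=x^\nu$. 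Each term $(x^\nu(m)\la^\th(m)\widehat{g_\be}(-H_m))^\om$ is an eigenvector of the dual action (multiplied by the character $\overline{q}^{\,m}$), hence trivially $(\hat{\th},\om)$-equicontinuous and contained in $M_{\hat{\th}}^\om$; since $M_{\hat{\th}}^\om$ is norm closed, $\si_{g_\be}^{\vph^\om}(x)\in M_{\hat{\th}}^\om$, whence $M^\om=M_{\hat{\th}}^\om$. This norm-convergent expansion, exploiting the spectral gap of the $H_n$ available in the lacunary (III$_0$) situation --- not the flow of weights --- is what your proposal is missing; without it, your proof establishes the theorem only for strictly positive $\la$.
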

\begin{proof}
It turns out from Theorem \ref{thm:cross}
that
$M_{\hat{\th}}^\om=N^\om\rti_\th\Z$.
Hence it suffices to show that
$M_{\hat{\th}}^\om=M^\om$.
For $\la\neq0$,
$\hat{\th}$ is nothing but the modular automorphism
$\si^{\hat{\tau}}$,
where $\tau$ denotes a faithful normal tracial
weight on $N$ with $\tau\circ\th=\la\tau$.
Hence we are done.

Suppose next that $\la=0$.
Take a faithful normal tracial weight $\tau$ on $N$
such that $\tau\circ \theta\leq \mu\tau$
with $0<\mu<1$.
Let $H_n$ be the selfadjoint operator
affiliated with $Z(N)$
such that
$\tau\circ\theta^n=\tau_{\exp(H_n)}$ for $n\in\Z$.
Then the spectrum of $H_n$ is contained in
$(-\infty,n\log\mu]$ and $[n\log\mu^{-1},\infty)$
when $n\geq1$ and $n\leq-1$, respectively.

Let $\vph:=\hat{\tau}$
and
$g_\be(t):=\sqrt{\beta/\pi}\exp(-\be t^2)$
for $\be>0$ and $t\in\R$
and
$U_\be:
=
\widehat{g_\be}(-\log\De_\vph)
=
\int_\R g_\be(t)\De_\vph^{it}\,dt
$,
where $\widehat{g_\be}(p):=\int_\R g_\be(t)e^{-ipt}\,dt
=\exp(-p^2/(4\be))$,
$p\in\R$.
Then $U_\be\to1$ in the strong topology as $\be\to\infty$.

Now we will show $M^\om=M_{\hat{\th}}^\om$.
Take $x=(x^\nu)^\om\in M^\om$.
It suffices to show that
$\sigma_{g_\be}^{\vph^\om}(x)$ is contained in $M_{\hat{\th}}$
since $\sigma_{g_\be}^{\vph^\om}(x)$ converges to $x$
as $\be\to\infty$ in the strong$*$ topology.
Note that
$\sigma_{g_\be}^{\vph^\om}(x)=(\sigma_{g_\be}^\vph(x^\nu))^\om$.

Let $y=\sum_{m\in\Z}y(m)\la^\th(m)$
with $y(m)\in N$
be the formal decomposition of $y\in M$.
Then we have
\[
\sigma_{g_\be}^\vph(y)
=\sum_{m\in\Z}y(m)\la^\th(m)\widehat{g_\be}(-H_m),
\]
where the series in the right-hand side converges
in the norm topology
since
$\|\widehat{g_\be}(-H_m)\|_\infty\leq\exp(-m^2|\log\mu|^2/(4\be))$
for all $m\in\Z$.
Fix $k\in\N$.
Then
\[
\Big{\|}
\sigma_{g_\be}^\vph(y)
-
\sum_{|m|\leq k}y(m)\la^\th(m)\widehat{g_\be}(-H_m)
\Big{\|}
\leq
\|y\|\sum_{|m|>k}\exp(-m^2|\log\mu|^2/(4\be)).
\]
Hence for $x=(x^\nu)^\om\in M^\om$ and $\be>0$,
we have
\begin{align*}
\Big{\|}
\sigma_{g_\be}^{\vph^\om}(x)
-
\sum_{|m|\leq k}
(x^\nu(m)\la^\th(m)\widehat{g_\be}(-H_m))^\om
\Big{\|}
\leq
\|x\|\sum_{|m|>k}\exp(-m^2|\log\mu|^2/(4\be)).
\end{align*}
It is clear that
$(x^\nu(m)\la^\th(m)\widehat{g_\be}(-H_m))^\om$ is contained in
$M_{\hat{\theta}}^\om$,
and so is $\sigma_{g_\be}^{\vph^\om}(x)$.
\end{proof}

Thanks to \cite[Theorem 6.11]{AH},
we know $M^\om$ is actually a type III$_1$ factor
when $M$ is.
Hence $N_\th^\om$ is a type II$_\infty$ factor in this situation,
but we have not found another proof of this fact which uses
Theorem \ref{thm:contdec}.

\subsection{Description of $M_\omega$ and fullness of $M$}

Let $M$ be an infinite type III factor with separable predual
and
$M=N\rti_\theta\R$ be the continuous crossed product decomposition of $M$
as before. Then the following result holds.

\begin{lem}
The asymptotic centralizer
$M_\omega$ is isomorphic to $(N_{\om,\theta})^{\th^\om}$.
\end{lem}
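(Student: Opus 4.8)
The plan is to realise $M_\om$ as a relative commutant inside the continuous core of $M^\om$ and then read off the answer from the crossed product structure. Recall the standard fact that $M_\om$ coincides with the relative commutant $M'\cap M^\om$, where $M$ sits in $M^\om$ as constant sequences. By Theorem \ref{thm:contdec} we have $M^\om = N_\th^\om\rti_{\th^\om}\R$, and under this identification the copy of $M=N\rti_\th\R$ is generated by $\pi_{\th^\om}(N)$ and $\la^{\th^\om}(\R)$, with $a\in N$ going to $\pi_{\th^\om}(a)$ and $\la^\th(s)$ going to $\la^{\th^\om}(s)$. Thus $M_\om = M'\cap(N_\th^\om\rti_{\th^\om}\R)$, and the whole problem reduces to computing this relative commutant.

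The crucial step I would isolate is to show that $M_\om$ is contained in $\pi_{\th^\om}(N_\th^\om)$, the fixed-point algebra of the dual action. Let $\vph$ be the dual weight of $\tau$, so that $\si^\vph$ is the dual action $\hat{\th}$ and, on the ultraproduct, $\si_t^{\vph^\om}=(\si_t^\vph)^\om$ is exactly the dual action $\widehat{\th^\om}$ of $N_\th^\om\rti_{\th^\om}\R$, whose fixed-point algebra is $\pi_{\th^\om}(N_\th^\om)$. Hence it suffices to prove $M_\om\subs(M^\om)^{\si^{\vph^\om}}$. To this end I would fix a faithful normal state $\ps$ on $M$ and first observe that $\ps^\om$ restricts to a faithful normal \emph{trace} on $M_\om$: for $x=(x^\nu)^\om$, $y=(y^\nu)^\om\in M_\om$ one has $\ps^\om(xy)=\lim_{\nu\to\om}\ps(x^\nu y^\nu)$, and $\om$-centrality of $(y^\nu)$ makes this symmetric in $x$ and $y$. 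Since $M_\om$ is globally invariant under $\si^{\ps^\om}$ (because $\si_t^{\ps^\om}$ fixes $M$ globally and therefore preserves $M'\cap M^\om$), Takesaki's theorem yields a $\ps^\om$-preserving conditional expectation onto $M_\om$, so $\si^{\ps^\om}$ restricts on $M_\om$ to the modular flow of the trace $\ps^\om\!\upharpoonright_{M_\om}$, which is trivial. Transporting from $\ps$ to $\vph$ through the Connes cocycle $[D\vph^\om:D\ps^\om]_t=([D\vph:D\ps]_t)^\om\in M$, which commutes with $M_\om$, then gives $\si_t^{\vph^\om}(y)=y$ for all $y\in M_\om$. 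I expect this passage — that central sequences already lie in the centralizer of the modular flow — to be the main obstacle; the delicate points are the traciality computation and the cocycle transport to the unbounded weight $\vph$.

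Once $M_\om\subs\pi_{\th^\om}(N_\th^\om)$ is established, the rest is routine. Writing a general element as $\pi_{\th^\om}(a)$ with $a\in N_\th^\om$, commutation with $\pi_{\th^\om}(N)$ forces $a\in N'\cap N_\th^\om=N_{\om,\th}$ (which is precisely $\om$-centrality together with $\th$-equicontinuity), while commutation with $\la^{\th^\om}(s)$, via $\la^{\th^\om}(s)\pi_{\th^\om}(a)\la^{\th^\om}(s)^*=\pi_{\th^\om}(\th_s^\om(a))$, forces $a\in(N_\th^\om)^{\th^\om}$. Intersecting the two conditions gives $a\in(N_{\om,\th})^{\th^\om}$, so that $M_\om=\pi_{\th^\om}\big{(}(N_{\om,\th})^{\th^\om}\big{)}$. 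As $\pi_{\th^\om}$ is a normal isomorphism onto its image, this yields the desired isomorphism $M_\om\cong(N_{\om,\th})^{\th^\om}$.
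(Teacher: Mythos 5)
Your opening ``standard fact'' --- that $M_\om$ coincides with the relative commutant $M'\cap M^\om$ --- is false in the situation at hand. It is true in the finite/semifinite setting (indeed this is exactly what the paper invokes for the semifinite core $N$, in the form $N'\cap N^\om=N_\om$), but $M$ here is an infinite type III factor, and for such $M$ one only has the inclusion $M_\om\subs M'\cap M^\om$. The correct identity, which is what the paper's proof runs on, is $M_\om=(M'\cap M^\om)^{\si^{\vph^\om}}$, obtained from \cite[Proposition 4.35]{AH} together with the Connes cocycle derivative $[D\vph^\om:D\ps^\om]_t=[D\vph:D\ps]_t\in M$ to pass from a state $\ps$ to the weight $\vph$. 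The distinction is not pedantic: the Remark at the end of this same paper (Ueda's problem) concerns precisely the possibility that $M'\cap M^\om$ is strictly larger than $M_\om$ for a type III factor; if your ``standard fact'' were true, that problem would be vacuous.

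The damage to your argument is asymmetric. Your proof of the forward inclusion $M_\om\subs\pi_{\th^\om}\big((N_{\om,\th})^{\th^\om}\big)$ survives, because it only uses the \emph{true} inclusion $M_\om\subs M'\cap M^\om$ together with your argument that $\si^{\vph^\om}$ acts trivially on $M_\om$ (traciality of $\ps^\om\!\upharpoonright_{M_\om}$, Takesaki's theorem, cocycle transport); that argument is correct and in effect re-proves the half of \cite[Proposition 4.35]{AH} that the paper cites as a black box. (Only your justification of the global $\si^{\ps^\om}$-invariance of $M_\om$ leans on the false identity; replace it by the direct observation that $\om$-centrality is preserved termwise by $\si_t^\ps$.) What genuinely breaks is the \emph{reverse} inclusion $\pi_{\th^\om}\big((N_{\om,\th})^{\th^\om}\big)\subs M_\om$, which you never argue: you only verify that such elements commute with $\pi_{\th^\om}(N)$ and $\la^{\th^\om}(\R)$, i.e.\ lie in $M'\cap M^\om$, and in the type III setting this does not place them in $M_\om$. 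In the paper this step is ``trivial'' exactly because $M_\om$ has already been identified with $(M'\cap M^\om)^{\si^{\vph^\om}}$, and elements of $\pi_{\th^\om}\big((N_{\om,\th})^{\th^\om}\big)$ are manifestly fixed by $\si^{\vph^\om}$, which is the dual action. To repair your proof, either establish the reverse half of \cite[Proposition 4.35]{AH} as well (this needs the fact that $\|x\ps^\om-\ps^\om x\|_{(M^\om)_*}=\lim_{\nu\to\om}\|x^\nu\ps-\ps x^\nu\|$, plus an approximation argument), or show directly that a $(\th,\om)$-equicontinuous, $\om$-central, $\th^\om$-fixed sequence $(a^\nu)$ in $N$ yields an $\om$-central sequence $(\pi_\th(a^\nu))$ in $M$, e.g.\ by checking that $\pi_\th(a^\nu)-a^\nu\oti1\to0$ $*$-strongly along $\om$ and that $(a^\nu\oti1)$ is $\om$-central in $N\loti B(L^2(\R))$.
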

\begin{proof}
Let $\tau$ be a faithful normal tracial weight on $N$
satisfying $\tau\circ\theta_s=e^{-s}\tau$ for $s\in\R$
and $\vph$ the dual weight of $\tau$.
Then by Theorem \ref{thm:contdec},
we have $M^\om=N_\th^\om\rti_{\th^\om}\R$.
We will compute $(M'\cap M^\om)^{\si^{\vph^\om}}$
which equals $M_\om$.
(Use \cite[Proposition 4.35]{AH} and the Connes cocycle derivative.)
Using $\la^\theta(t)\in M$, $t\in\R$,
we have
\begin{equation}
\label{eq:relcom}
M'\cap M^\om
\subset \pi_{\theta^\om}((N_\theta^\om)^{\theta^\om})
\vee
\{\la^{\theta^\om}(t)\mid t\in\R\}''.
\end{equation}
This implies that
$(M'\cap M^\om)^{\si^{\vph^\om}}
\subset \pi_{\theta^\om}(N'\cap(N_\theta^\om)^{\theta^\om})$.
Since the converse inclusion trivially holds
and $N'\cap N^\om=N_\om$,
we obtain
$M_\om=\pi_{\theta^\om}((N_{\om,\theta})^{\theta^\om})$.
\end{proof}

A separable factor $M$ is said to be
\emph{full} when $M_\om=\C$.
The fullness of $M$
has been studied by several researchers
in terms of the continuous core.
See references cited in \cite{Ma,TU}.
Also see \cite{HMV}
for recent progress in study of fullness.
Among them, Marrakchi in \cite{Ma}
shows that
$N$ is full if and only if
$M$ is a full type III$_1$ factor with $\tau$-invariant
being the usual topology of $\R$.
The following theorem would suggest that
the $\tau$-invariant could measure how continuously $\th^\om$
is acting on $N_\om$.
Our proof is similar to that of \cite[Lemma 3]{TU}.

\begin{thm}
Let $M$ be a type III$_1$ factor
with the continuous crossed product decomposition $M=N\rti_\th\R$ as before.
Then $M$ is full if and only if
$N_{\om,\th}=\C$.
\end{thm}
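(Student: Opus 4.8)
The plan is to read the theorem off the preceding Lemma. Since $M$ is full exactly when $M_\om=\C$, and the Lemma identifies $M_\om$ with $(N_{\om,\theta})^{\theta^\om}$, it suffices to establish
\[
(N_{\om,\theta})^{\theta^\om}=\C \quad\Longleftrightarrow\quad N_{\om,\theta}=\C .
\]
The implication from right to left is immediate, so the entire content lies in showing that triviality of the $\theta^\om$-fixed points already forces $N_{\om,\theta}=\C$. Throughout I write $P:=N_{\om,\theta}$ and $\gamma:=\theta^\om\!\upharpoonright_P$, and I aim to show that $P^\gamma=\C$ implies $\gamma$ is trivial, whence $P=P^\gamma=\C$.

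First I would record the structural features of $P$. As $M$ is of type III$_1$, its core $N$ is a type II$_\infty$ factor; moreover $M^\om$ is again of type III$_1$ by \cite[Theorem 6.11]{AH} together with Theorem \ref{thm:contdec}, so $N_\theta^\om$ is likewise a type II$_\infty$ factor. Since $P=N_{\om,\theta}=N'\cap N_\theta^\om$ is the relative commutant of one type II$_\infty$ factor inside another sharing its trace, $P$ is a \emph{finite} von Neumann algebra. It carries a faithful normal trace that is \emph{preserved} by $\gamma$: the trace scaling $\tau\circ\theta_s=e^{-s}\tau$ of the core flow is exactly compensated by the dilation $\tau(\theta_{-s}(e))=e^{s}\tau(e)$ of a localizing finite projection, so $\gamma$ fixes the canonical trace of $P$. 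Finally, by the definition of $(\theta,\om)$-equicontinuity, $\gamma$ is a genuinely point-$\sigma$-strong$^*$ continuous action of $\R$ on $P$, and may therefore be analysed spectrally.

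The analysis I would carry out uses the $L^1$-averaging $\gamma_f(x)=\int_\R f(s)\gamma_s(x)\,ds\in P$ highlighted before Example \ref{exam:G}: the fixed-point algebra $P^\gamma$ is precisely the spectral subspace of $\gamma$ at frequency $0$, so under the hypothesis $P^\gamma=\C$ any non-triviality of $P$ must manifest as a nonzero spectral element concentrated at some frequency $\lambda\neq0$, that is, a nonzero $u\in P\subset N_\theta^\om$ behaving like $\theta^\om_s(u)=e^{i\lambda s}u$. The crux — and the step I expect to be the main obstacle — is to exclude such a $u$. The natural idea, in the spirit of \cite[Lemma 3]{TU}, is to play the trace scaling $\tau\circ\theta_s=e^{-s}\tau$ against the finiteness of $P$ and the standing hypothesis $P^\gamma=\C$, transporting the central sequence underlying $u$ back into $M$ via $\pi_{\theta^\om}$ and the implementing unitaries $\lambda^\theta(t)\in M$ so as to produce a non-scalar element of $M'\cap M^\om$ fixed by $\sigma^{\vph^\om}$, i.e. a non-trivial element of $M_\om$, contradicting $M_\om=\C$. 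The delicate point is that $\theta^\om$ attaches the phase $e^{i\lambda s}$ to $u$ while simultaneously dilating the trace on $N_\theta^\om$; reconciling these two effects — equivalently, showing that a nonzero spectral subspace at $\lambda\neq0$ cannot survive inside the finite algebra $P$ once $P^\gamma=\C$ — is where I expect the real work to lie.

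Once all nonzero Arveson spectrum of $\gamma$ has been excluded, the action $\gamma$ is trivial, so $P=P^\gamma$, and the hypothesis then gives $P=\C$. Together with the trivial converse this yields the displayed equivalence, and hence the theorem.
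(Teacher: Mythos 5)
Your reduction is correct, and it is in fact the same skeleton as the paper's proof: granting the preceding Lemma, the theorem amounts to showing that if $M$ is full then the Arveson spectrum of $\th^\om$ on $P:=N_{\om,\th}$ is $\{0\}$, after which $N_{\om,\th}=(N_{\om,\th})^{\th^\om}=M_\om=\C$. But you do not prove that spectral statement; you explicitly defer it (``where I expect the real work to lie''). Since that statement \emph{is} the theorem, the proposal has a genuine gap rather than being a proof.

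Moreover, the mechanism you sketch for closing the gap points the wrong way. From a unitary $u\in N_{\om,\th}$ with $\th_t^\om(u)=e^{ipt}u$, $p\neq0$, you propose to manufacture a non-scalar element of $M'\cap M^\om$ fixed by $\si^{\vph^\om}$, i.e.\ a non-trivial element of $M_\om$. This cannot work as stated: under the identification of Theorem \ref{thm:contdec}, $\pi_{\th^\om}(u)$ satisfies the covariance relation $\la^\th(t)\,\pi_{\th^\om}(u)\,\la^\th(t)^*=e^{ipt}\pi_{\th^\om}(u)$, so for $p\neq0$ it fails to commute with $M$ and produces no element of $M_\om$; and your structural observations (finiteness of $P$, $\gamma$-invariance of its trace), while essentially correct, never interact with the fullness hypothesis at all. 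The paper's argument uses $u$ quite differently, and this is the missing idea: first, a genuine \emph{unitary} eigenvector exists by \cite[Theorem 7.7]{MT-Roh} (passing from nonzero Arveson spectrum to an exact unitary eigenvector is itself a nontrivial step); second, for a unitary representing sequence $(u^\nu)$ one checks that $\Ad\pi_\th(u^\nu)$ converges to the dual action $\hat{\th}_p$ in $\Aut(M)$ --- precisely because of the covariance relation above; third, fullness of $M$ enters through Connes' characterization that $\Int(M)$ is closed in $\Aut(M)$, so the limit $\hat{\th}_p$ must be inner; finally, since $\hat{\th}$ is the modular flow $\si^\vph$ of the dual weight and a type III$_1$ factor has trivial $T$-invariant, innerness forces $p=0$. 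Note that this last step is where the type III$_1$ hypothesis genuinely does its work; in your outline it appears only in structural remarks about $P$ that play no role in excluding nonzero spectrum.
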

\begin{proof}
The ``if'' part is trivial from the previous lemma
or \cite[Corollary 1.9]{MT-discrete}.
Suppose that $M$ is full.
Let $p\in\R$ be an element of
the Arveson spectrum of $\theta^\om$ on $N_{\om,\th}$.
By \cite[Theorem 7.7]{MT-Roh},
we can take a unitary $u\in N_{\om,\th}$
such that $\th_t^\om(u)=e^{ipt}u$ for all $t\in\R$.
Let $(u^\nu)\in\ell^\infty(N)$
be a unitary representing sequence of $u$.
Then $\Ad \pi_\th(u^\nu)$ converges to $\hat{\th}_p$
in $\Aut(M)$.
The fullness of $M$ implies
the innerness of $\hat{\th}_p$,
and it turns out that $p=0$.
Namely, $\theta^\om$ is trivial on $N_{\om,\th}$,
and
the previous lemma implies $N_{\om,\th}=\C$.
\end{proof}

\begin{rem}
Ueda's problem asks if
$M'\cap M^\om=\C$ holds for any full factor $M$.
This is affirmatively solved by Ando--Haagerup in \cite[Theorem 5.2]{AH}.
We would like to deduce this result by strengthening (\ref{eq:relcom}),
but this approach has not been successful yet.
Instead, let us present a short proof of the problem.
Put $R:=M'\cap M^\om$.
Suppose $R$ were non-trivial.
Let $\vph$ be a faithful normal state on $M$.
Since $R_{\vph^\om}=M_\om=\C$,
$R$ would be a type III$_1$ factor.
We claim that for any $\vep>0$,
there exists $\de>0$
such that
if $x\in R$ with $\|x\|\leq1$
satisfies
$\|x\vph^\om-\vph^\om x\|_{(M^\om)_*}<\de$,
then $\|x-\vph^\om(x)\|_{\vph^\om}<\vep$.
By usual diagonal argument,
we can show this claim by contradiction.
Readers are referred to \cite[Chapter 5]{Oc}
for this.
Also note that
$\|x\vph^\om-\vph^\om x\|_{(M^\om)_*}
=\lim_{\nu\to\om}\|x^\nu\vph-\vph x^\nu\|$
for all $x=(x^\nu)^\om\in M^\om$.
For proof of this fact,
see \cite[Lemma 4.36]{AH} or \cite[Lemma 9.3]{MT-Roh}.
However,
since $R$ is a type III$_1$ factor,
there exist many non-trivial norm bounded sequences
$(y^k)\in \ell^\infty(R)$
such that $\|y^k\vph^\om-\vph^\om y^k\|\to0$
as $k\to\infty$, and we have a contradiction.
The last claim is implied by the fact that
$(R^\om)_{\psi^\om}$ is a type II$_1$ factor,
where $\psi:=\vph^\om$ \cite[Proposition 4.24]{AH}.
\end{rem}

\end{document}